\newtheoremstyle{erthm}
  {}
  {}
  {\itshape}
  {}
  {\fontseries{bx}\selectfont\itshape}
  {--}
  { }
  {}
  \newtheoremstyle{errem}
    {}
    {}
    {}
    {}
    {\ttfamily\itshape}
    {--}
    { }
    {}
\theoremstyle{erthm}
\newtheorem*{theorem}{Theorem}
\newtheorem{lemma}{Lemma}
\theoremstyle{errem}
\newtheorem*{remark}{Remark}
\title[Sign changes of coefficients of spinor zeta function]{Sign changes in short intervals of coefficients of spinor zeta function of a Siegel cusp form of genus 2}
\keywords{Spinor zeta function, Siegel form, Fourier coefficients,Voronoi formula}
\thanks{The work was supported by a grant from the Indo-French Centre for the Promotion of Advanced Research (CEFIPRA Project No. 4601-2). Part of this work has been done during the visit of the second author at l'Institut \'Elie Cartan de Lorraine and finished during the visit of the first and third authors at Tata Institute of Fundamental Research. They would like to thank these institutes for hospitality. }
\subjclass[2010]{11F46,11F30,11M41,11N37,11N56}
\author[E. Royer]{Emmanuel Royer}
\address{%
Emmanuel Royer\\
Clermont Universit\'e\\
Universit\'e Blaise Pascal\\
Laboratoire de math\'ematiques\\
BP 10448\\
F-63000 Clermont-Ferrand\\
France %
}
\curraddr{%
Emmanuel Royer\\
Universit\'e Blaise Pascal\\
Laboratoire de math\'ematiques\\
Les C\'ezeaux\\
BP 80026\\
F-63171 Aubi\`ere Cedex\\
France %
}
\email{{emmanuel.royer@math.univ-bpclermont.fr}}
\author[J. Sengupta]{Jyoti Sengupta}
\address{Jyoti Sengupta\\
School of Mathematics \\
T.I.F.R. \\
Homi Bhabha Road \\
400\ 005 Mumbai\\
India}
\email{sengupta@math.tifr.res.in}
\author[J. Wu]{Jie Wu}
\address{%
Jie Wu\\
CNRS\\
Institut \'Elie Cartan de Lorraine\\
UMR 7502\\
F-54506 Van\-d\oe uvre-l\`es-Nancy\\
France}
\curraddr{%
Université de Lorraine\\
Institut \'Elie Cartan de Lorraine\\
UMR 7502\\
F-54506 Van\-d\oe uvre-l\`es-Nancy\\
France
}
\email{jie.wu@univ-lorraine.fr}
\DeclarePairedDelimiter\abs{\lvert}{\rvert}
\newcommand*{\CC}{\mathbb{C}}
\newcommand*{\dd}{%
  \mathop{\mathrm{d}\null}\mskip-\thinmuskip\mathord{\null}}
\newcommand*{\e}{\mathrm{e}} 
\DeclarePairedDelimiter\ent{\lfloor}{\rfloor}
\renewcommand*{\epsilon}{\varepsilon} 
\renewcommand*{\geq}{\geqslant} 
\newcommand*{\GL}{\mathrm{GL}}
\newcommand*{\ic}{\mathrm{i}} 
\renewcommand*{\leq}{\leqslant} 
\newcommand*{\N}{\mathbb{N}}
\newcommand*{\prem}{\mathcal{P}}
\newcommand*{\Q}{\mathbb{Q}}
\newcommand*{\re}{\mathord{R\mkern-1mu e}} 
\DeclareMathOperator{\sgn}{sgn} 
\newcommand*{\Sp}{\mathrm{Sp_4}}
\newcommand*{\Z}{\mathbb{Z}}
\begin{document}
\mathtoolsset{showonlyrefs,mathic,centercolon}
\begin{abstract}
In this paper, we establish a Voronoi formula for the spinor zeta function of a Siegel cusp form of genus \(2\). We deduce from this formula quantitative results on the number of its positive (resp. negative) coefficients in some short intervals. %
\end{abstract}
\maketitle
\tableofcontents%
\section{Introduction}

Let \(S_k\) be the space of Siegel cusp forms of integral weight \(k\) on the group \(\Sp(\Z)\subset \GL_4(\Q)\) and let \(F\in S_k\) be an eigenfunction of all the Hecke operators. 
Let
\begin{equation}\label{defZFsEuler}
Z_F(s):=\prod_{p\in\prem} Z_{F, p}(p^{-s})\quad (\re s> 1)
\end{equation}
be the spinor zeta function of \(F\). Here \(\prem\) is the set of prime numbers and if \(\alpha_{0, p},\alpha_{1, p},\alpha_{2, p}\) are the Satake \(p\)-parameters attached to \(F\) then
\begin{equation}\label{defZFpt}  
Z_{F, p}(t)^{-1}:=(1-\alpha_{0, p}t)(1-\alpha_{0, p}\alpha_{1, p}t)(1-\alpha_{0, p}\alpha_{2, p}t)(1-\alpha_{0, p}\alpha_{1, p}\alpha_{2, p}t). %
\end{equation}
They satisfy %
\begin{equation}\label{productalphajp}
\alpha_{0, p}^2 \alpha_{1, p}\alpha_{2, p} = 1
\end{equation}
for all \(p\).  A Siegel form is in the Maass subspace \(S_k^M\) of \(S_k\) if it is a linear combination of Siegel forms \(F\) that are eigenvectors of all the Hecke operators and for which there exists a primitive modular form, \(f\), of weight \(2k-2\) such that %
\[ %
Z_F(s)=\zeta\left(s-\frac{1}{2}\right)\zeta\left(s+\frac{1}{2}\right)L(f,s). %
\]
Here \(L(f,s)\) is the \(L\)-function of \(f\) (note that we normalise all the \(L\)-functions so that the critical strip is \(0\leq\re s\leq 1\) and the functional equation relates the value at \(s\) to the value at \(1-s\)). This happens only if \(k\) is even. The bijective linear application between \(S_k^M\) and the space of modular forms of weight \(2k-2\) is called the Saito-Kurokawa lifting~\cite{MR633910}. The Ramanujan-Petersson conjecture says that
\begin{equation}\label{RPConjecture}
\abs{\alpha_{j, p}}=1 \text{ for \(j=0, 1, 2\) and all primes \(p\).}
\end{equation}
It is not true for Siegel Hecke-eigenforms in \(S_k^M\). But, if \(k\) is odd or, if \(k\) is even and the form is in the orthogonal complement of \(S_k^M\), then it has been established by Weissauer~\cite{MR2498783}. We denote by \(H_k^*\) the set of Siegel cuspidal Hecke-eigenforms of weight \(k\) and genus \(2\) that, if \(k\) is even, are in the orthogonal complement of \(S_k^M\). The forms we consider in this paper all belong to \(H_k^*\). According to Breulmann~\cite{MR1719682}, a Siegel Hecke-eigenform is in \(S_k^M\) if and only if all its Hecke eigenvalues are positive.

According to \cite{MR0432552, MR575936}, the function
\begin{equation}\label{defLambdaFs}
\Lambda_F(s):= (2\pi)^{-s} \Gamma\left(s+k-\tfrac{3}{2}\right)\Gamma\left(s+\tfrac{1}{2}\right) Z_F(s)
\end{equation}
has an entire continuation to \(\CC\) since \(F\in H_k^*\). Further it satisfies the functional equation
\begin{equation}\label{FE}
\Lambda_F(s)=(-1)^k \Lambda_F(1-s)
\end{equation}
on \(\CC\). The spinor zeta function of \(F\) has the Dirichlet expansion:
\begin{equation}\label{defZFs}
Z_F(s) = \sum_{n\geq 1} a_F(n) n^{-s}
\end{equation}
for \(\re s>1\). By using \eqref{RPConjecture}, one sees that
\begin{equation}\label{RP-Conjecture}
\abs*{a_F(n)}\leq d_4(n)
\end{equation}
for all \(n\geq 1\), where \(d_4(n)\) is the number of solutions in positive integers \(a,b,c,d\) of \(n=abcd\).

In this paper, we investigate the problem of sign changes for the sequence \(\left(a_F(n)\right)_{n\geq 1}\) in short intervals. Define
\begin{equation}
\mathscr{N}_{F}^{+}(x):= \sum_{\substack{n\leq x\\ a_F(n)>0}} 1\quad\text{and}\quad \mathscr{N}_{F}^{-}(x):= \sum_{\substack{n\leq x\\ a_F(n)<0}} 1. %
\label{defNFx}
\end{equation}
We apply a method due to Lau \& Tsang \cite{MR1887887} to establish the following Theorem. Convergence issues however appear and we have to deal with them.

\begin{theorem} 
Let \(F\) be in \(H_k^*\) and \(\varepsilon>0\). There are constants \(c>0\) absolute and \(x_0(F)\) depending only on \(F\) such that for all \(x\geq x_0(F)\), we have
\begin{equation}
\mathscr{N}_F^{+}(x+cx^{3/4})-\mathscr{N}_F^{+}(x)\gg x^{3/8-\varepsilon},
\end{equation}
and %
\begin{equation}
\mathscr{N}_F^{-}(x+cx^{3/4})-\mathscr{N}_F^{-}(x)\gg x^{3/8-\varepsilon},
\end{equation}
where the implied constants in \(\gg\) depends only on \(\varepsilon\). 
\end{theorem}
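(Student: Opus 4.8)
\medskip\noindent\emph{Strategy of proof.} The plan is to run a resonance argument in the spirit of Lau \& Tsang on the summatory function $A_F(x):=\sum_{n\le x}a_F(n)$, whose fine behaviour is encoded by the Voronoi formula for $Z_F$. First I would record the truncated form of that formula: for $x\ge x_0(F)$ and a large parameter $N=N(x)$,
\begin{equation*}
A_F(x)=C_F\,x^{3/8}\sum_{n\le N}\frac{a_F(n)}{n^{5/8}}\cos\!\bigl(\kappa(nx)^{1/4}+\vartheta\bigr)+O_F\!\bigl(x^{\epsilon}\bigr),
\end{equation*}
where $C_F\neq0$, $\kappa>0$ and $\vartheta$ depend only on $F$ (through $k$ and the sign $(-1)^k$ in~\eqref{FE}); the exponents $3/8=\tfrac{d-1}{2d}$, $5/8=\tfrac{d+1}{2d}$ and the phase $\kappa(n\,\cdot)^{1/4}$ reflect the degree $d=4$ of $Z_F$ and the gamma factor of~\eqref{defLambdaFs}. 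Since $a_F(1)=1$ in~\eqref{defZFs}, the term $n=1$ has amplitude $\asymp x^{3/8}$, and it is the one that will resonate.

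The core of the proof, and the step I expect to be the main obstacle, is a localised $\Omega_\pm$-statement for $A_F$: for a suitable \emph{absolute} constant $c$, so large that $u\mapsto\kappa u^{1/4}$ runs through several full periods on $(x,x+cx^{3/4}]$ (its derivative there being $\asymp x^{-3/4}$), and for every $x\ge x_0(F)$, there exist $x<u_1<u_2<u_3\le x+cx^{3/4}$ with
\begin{equation*}
A_F(u_2)-A_F(u_1)\gg x^{3/8-\epsilon}\qquad\text{and}\qquad A_F(u_2)-A_F(u_3)\gg x^{3/8-\epsilon}
\end{equation*}
(a ``peak''; symmetrically a ``valley''). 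To obtain this I would correlate $A_F$ with the leading frequency: fix a smooth bump $w$ supported in the interior of $(x,x+cx^{3/4})$ with $\int w\asymp x^{3/4}$ and evaluate $\int A_F(u)\cos(\kappa u^{1/4}+\vartheta)\,w(u)\,\dd u$ via the truncated formula. The term $n=1$ contributes $\asymp C_F\int u^{3/8}\cos^2(\kappa u^{1/4}+\vartheta)\,w(u)\,\dd u\asymp C_F\,c\,x^{9/8}$, whereas each $n\ge2$ contributes, after two integrations by parts (legitimate since $w$ is smooth and compactly supported), $\ll\abs{C_F}\,x^{9/8}\,c^{-1}\,d_4(n)\,n^{-5/8}\,(n^{1/4}-1)^{-2}$; hence the total over $n\ge2$ is $\ll\abs{C_F}\,x^{9/8}\,c^{-1}$ because $\sum_{n\ge2}d_4(n)\,n^{-5/8}(n^{1/4}-1)^{-2}$ converges, a small proportion of the main term once $c$ is large. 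So the correlation is $\asymp C_F\,c\,x^{9/8}$, which forces $A_F$ to be large somewhere on the interval; running the same correlation on consecutive sub-blocks, and locating points at which the Voronoi tail is small, one extracts in turn a large positive value, a subsequent large negative value, and again a subsequent large positive value of $A_F$, i.e.\ the peak. The genuine difficulty is that, in the normalisation above, the Voronoi series is \emph{not} absolutely convergent (already $\sum_n d_4(n)\,n^{-7/8}$ diverges), so the termwise manipulations must be read in a Ces\`aro/Abel sense or performed only after a further smoothing; mastering this is precisely the ``convergence issue'' announced in the introduction, and it is what produces the loss $x^{-\epsilon}$.

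Granting the peak, the rest is routine. Put $S_F^{+}(x):=\sum_{x<n\le x+cx^{3/4},\,a_F(n)>0}a_F(n)$ and $S_F^{-}(x):=\sum_{x<n\le x+cx^{3/4},\,a_F(n)<0}\abs{a_F(n)}$. For any $x<u<v\le x+cx^{3/4}$, discarding terms of the wrong sign gives $A_F(v)-A_F(u)=\sum_{u<n\le v}a_F(n)\le S_F^{+}(x)$ and $A_F(u)-A_F(v)\le S_F^{-}(x)$; applying this with $(u,v)=(u_1,u_2)$ and then with $(u,v)=(u_3,u_2)$ yields $S_F^{+}(x)\gg x^{3/8-\epsilon}$ and $S_F^{-}(x)\gg x^{3/8-\epsilon}$. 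Finally, by~\eqref{RP-Conjecture} we have $\abs{a_F(n)}\le d_4(n)\le x^{\epsilon}$ for all $n\le x+cx^{3/4}$ once $x$ is large, so
\begin{equation*}
\mathscr{N}_F^{+}(x+cx^{3/4})-\mathscr{N}_F^{+}(x)=\#\{x<n\le x+cx^{3/4}:a_F(n)>0\}\ge\frac{S_F^{+}(x)}{\max_{n\le x+cx^{3/4}}d_4(n)}\gg x^{3/8-2\epsilon},
\end{equation*}
and in the same way $\mathscr{N}_F^{-}(x+cx^{3/4})-\mathscr{N}_F^{-}(x)\gg x^{3/8-2\epsilon}$. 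Replacing $\epsilon$ by $\epsilon/2$ gives the Theorem, with $c$ absolute, $x_0(F)$ depending only on $F$, and the implied constants in $\gg$ depending only on $\epsilon$.
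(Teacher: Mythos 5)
Your overall skeleton (truncated Voronoi formula, a resonance argument on an interval of length $\asymp x^{3/4}$ to force a rise-and-fall of $S_F(x)=\sum_{n\le x}a_F(n)$ of size $x^{3/8}$, then the counting step dividing by $\max d_4(n)\ll x^{\epsilon}$) is the same as the paper's, and your final deduction is essentially verbatim the paper's last page. But the central step has a genuine gap, for two concrete reasons. First, the truncated formula you start from does not exist with error $O_F(x^{\epsilon})$: truncating at $M$ costs $O\bigl((x^3M^{-1})^{1/4+\epsilon}+(xM)^{1/4+\epsilon}\bigr)$, which is $\gg x^{1/2}$ for every admissible $M$, i.e.\ always larger than the $x^{3/8}$ fluctuation you are trying to detect. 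This is precisely why the paper cannot run the resonance on Lemma~\ref{lemmaVoronoi} and instead invokes Hafner's exact identity (Theorem~C of \cite{MR654524}), valid only in the $(C,1)$-Ces\`aro sense, together with the bound \eqref{MeanValue1/2} to control the secondary series $\sum_n g_n^*$; your remark that the manipulations ``must be read in a Ces\`aro/Abel sense'' points at the right issue but does not supply this mechanism, and with the true truncation error your correlation's error contribution ($\asymp x^{1/2+\epsilon}\cdot x^{3/4}$) swamps the claimed main term $\asymp c\,x^{9/8}$.

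Second, and more fundamentally, your resonator $\cos\bigl(\kappa u^{1/4}+\vartheta\bigr)w(u)$ is a \emph{signed} kernel, so a large correlation only yields $\max_u\abs{S_F(u)}\gg x^{3/8}$; it cannot extract the signs you need. For instance a nonnegative function behaving like $x^{3/8}\bigl(1+\cos(\kappa u^{1/4}+\vartheta)\bigr)$ produces exactly the same large correlation, yet has no ``subsequent large negative value,'' so the sentence in which you extract a large positive value, then a large negative one, from sub-block correlations is unsupported. The device that supplies the sign information in the paper (following Lau--Tsang and Lau--Wu) is the substitution $x=(t+\kappa u)^4$, which linearises the phases, together with the \emph{nonnegative} kernel $K_\tau(u)=(1-\abs{u})\bigl(1+\tau\cos(4\sqrt{2\pi}\,\kappa u)\bigr)$, $\tau=\pm1$: positivity of $K_\tau$ plus the resonance $r_1=\tau/2+O(\kappa^{-2})$ of the $n=1$ term forces the normalised summatory function to take values $\gg1$ of the sign $\tau$ somewhere in the window, giving the localised $\Omega_\pm$ statement (Lemma~\ref{lem1}) from which the peak/valley configuration follows. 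To repair your argument you would either have to adopt this nonnegative-kernel resonance, or add a separate estimate showing that the unoscillated average $\int S_F(u)w(u)\dd u$ over the window is $o(c\,x^{9/8})$ and then argue via positive/negative parts --- neither of which is in your proposal.
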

\begin{remark}
An ingredient of our proof is the inequality %
\begin{equation}\label{MeanValue1/2}
\sum_{n\leq x} a_F(n) \ll_{F, \varepsilon} x^{3/5+\varepsilon}\quad (x\geq 2).
\end{equation}
(see Lemma~\ref{lemmaVoronoi}). We also prove, and use an Omega-result: %
\[ %
\sum_{n\leq x} a_F(n)=\Omega_{\pm}(x^{3/8})
\]
(see Lemma~\ref{lem1}).
\end{remark}

Two related problems have already been studied. Denote by \(\lambda_F(n)\) the \(n\)-th normalised Hecke eigenvalue of \(F\). Then we have
\begin{equation}\label{Relationlambdaa}
\sum_{n=1}^\infty \frac{\lambda_F(n)}{n^s} = \frac{Z_F(s)}{\zeta(2s+1)}\quad (\re s>1).
\end{equation}
In \cite{MR2262899}, Kohnen proved that 
\begin{equation}
\#\{n\leq x\colon \lambda_F(n)>0\}\to\infty\qquad (x\to\infty).
\end{equation}
and %
\begin{equation}
\#\{n\leq x\colon \lambda_F(n)<0\}\to\infty\qquad (x\to\infty).
\end{equation}

Then, Das ~\cite{DAS_IJNT_2013} proved that, as \(x\) tends to \(+\infty\), the quantities 
\begin{equation}
\frac{1}{\#\{p\in\prem\colon p\leq x\}}
\#\{p\in\prem\cap[1,x]\colon \lambda_F(p)>0\} %
\end{equation}
and
\begin{equation}
\frac{1}{\#\{p\in\prem\colon p\leq x\}}
\#\{p\in\prem\cap[1,x]\colon \lambda_F(p)<0\} %
\end{equation}
are bounded from below (and naturally also bounded from above). %
In \cite{MR2326486}, Kohnen \& Sengupta proved that under the same assumption there is an integer \(n\ll k^2(\log k)^{20}\) such that \(\lambda_F(n)<0\). Their result has been generalised to higher levels by Brown~\cite{MR2661286}. An interesting study of sign changes is also due to Pitale \& Schmidt~\cite{MR2425722}. They prove that if \(F\) is not in the Maass subspace, there exists an infinite set of prime numbers \(p\) not dividing the level so that there are infinitely many \(r\) with \(\lambda_F(p^r)>0\) and infinitely many \(r\) with \(\lambda_F(p^r)<0\). %
\begin{remark}
Das' result is on the counting function of the Hecke eigenvalues. It implies that, as \(x\) tends to \(+\infty\), the quantities 
\begin{equation}
\frac{1}{\#\{p\in\prem\colon p\leq x\}}
\#\{p\in\prem\cap[1,x]\colon a_F(p)>0\} %
\end{equation}
and
\begin{equation}
\frac{1}{\#\{p\in\prem\colon p\leq x\}}
\#\{p\in\prem\cap[1,x]\colon a_F(p)<0\} %
\end{equation}
are bounded from below. The reason is that~\eqref{Relationlambdaa} implies %
\[ %
a_F(n)=\sum_{\substack{(d,m)\in\N^2\\ d^2m = n}}\frac{\lambda_F(m)}{d}. %
\]
Thus \(a_F(n) = \lambda_F(n)\) for \(n\) squarefree and in particular for \(n\) a prime. Moreover, the proof of Kohnen \& Sengupta can be adapted to prove that there is an integer \(n\ll k^2(\log k)^{20}\) such that \(a_F(n)<0\).
\end{remark}

To end this introduction, we give a very short amount on what is known in the case of classical modular forms, referring to~\cite{Lau_Liu_Wu} for a more complete survey. Let \(f\) be a primitive modular form of weight \(k\) on the congruence subgroup \(\Gamma_0(N)\). Lau \& Wu~\cite{MR2551607} proved that, as \(x\) tends to \(+\infty\), the quantities 
\begin{equation}\label{eq_premiersquant}
\frac{1}{\#\{n\in\N^*\colon n\leq x\}}
\#\{n\in\N\cap[1,x]\colon \lambda_f(n)>0\} %
\end{equation}
and
\begin{equation}
\frac{1}{\#\{n\in\N^*\colon n\leq x\}}
\#\{n\in\N\cap[1,x]\colon \lambda_f(n)<0\} %
\end{equation}
are bounded from below. Even though we know by the Sato-Tate Theorem~\cite{MR2827723} that %
\begin{equation}
\lim_{x\to\infty}\frac{1}{\#\{p\in\prem\colon p\leq x\}}
\#\left\{p\in\prem\cap[1,x]\colon \lambda_f(p)>0\right\}=\frac{1}{2} %
\end{equation}
it does not seem easy to deduce a similar limit for~\eqref{eq_premiersquant}.  Lau \& Wu proved also the following result on intervals. There exists \(C>0\) such that, for any \(\varepsilon>0\), there exists \(K>0\) such that for any even integer \(k\geq 4\), for any integer \(N\geq 1\) we have %
\[%
\#\left\{n\in[x,x+CE_Nx^{1/2}]\colon \lambda_f(n)>0\right\}\geq K(Nx)^{1/4-\varepsilon}
\]
as soon as \(x\geq N^2x_0(k)\) where \(x_0(k)\) is a positive real number only depending on \(k\). Here, %
\[%
E_N=N^{1/2}\left(\sum_{d\mid N}d^{-1/2}\log(2d)\right)^3.
\]
An important ingredient used by Lau \& Wu is the following result by Serre~\cite{MR644559}. Let \(f\) be a primitive modular form of weight \(k\) on the congruence subgroup \(\Gamma_0(N)\). Let \(\delta<\dfrac{1}{2}\), there exists \(C>0\) such that, for any \(x\geq 2\), we have %
\[%
\frac{1}{\#\{p\in\prem\colon p\leq x\}}\#\left\{p\in\prem\cap[1,x]\colon \lambda_f(p)=0\right\}\leq\frac{C}{\log(x)^\delta}. %
\]
Such an inequality is missing in the case of Siegel modular forms. %
%
%
\section{Truncated Voronoi formula}

The aim of this section is to establish the following truncated Voronoi formula, which will be needed in the proof of the Theorem.

\begin{lemma}\label{lemmaVoronoi}
Let \(F\) be in \(H_k^*\). Then for any \(A>0\) and \(\varepsilon>0\), we have
\begin{multline}\label{Voronoi}
\sum_{n\leq x} a_F(n) 
 = \frac{x^{3/8}}{(2\pi)^{3/4}} 
\sum_{n\leq M} \frac{a_F(n)}{n^{5/8}} \cos\left(4\sqrt{2\pi} (nx)^{1/4}+\frac{\pi}{4}\right)
\\
+ O_{A, F, \varepsilon}\left((x^3M^{-1})^{1/4+\epsilon} + (xM)^{1/4+\varepsilon}\right)
\end{multline}
uniformly for \(x\geq 2\) and \(1\leq M\leq x^A\), where the implied constant depends on \(A\), \(F\) and \(\varepsilon\) only. In particular %
\begin{equation}\label{MeanValue1/2_deux}
\sum_{n\leq x} a_F(n) \ll_{F, \varepsilon} x^{3/5+\varepsilon}\quad (x\geq 2).
\end{equation}%
\end{lemma}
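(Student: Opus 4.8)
The plan is to derive the truncated Voronoi formula \eqref{Voronoi} by the standard contour-integration method, working with the completed zeta function $\Lambda_F(s)$ whose analytic continuation and functional equation \eqref{FE} are recorded in the excerpt. First I would write, for a suitable smooth or sharp weight, a Perron-type representation
\[
\sum_{n\leq x} a_F(n) = \frac{1}{2\pi\ic}\int_{(\sigma)} Z_F(s)\,\frac{x^s}{s}\,\dd s
\]
with $\sigma>1$, controlling the truncation of the $s$-integral by the convexity/Ramanujan bound \eqref{RP-Conjecture} on the coefficients (so $Z_F(s)$ is absolutely convergent and bounded for $\re s>1$). Since $\Lambda_F$ is entire, the only pole of the integrand comes from $1/s$ at $s=0$, whose residue is $Z_F(0)$, a constant absorbed into the error term; so I would shift the contour to a line $\re s = -\delta$ to the left of $0$.

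Next I would apply the functional equation. Writing $Z_F(s) = \gamma(s) Z_F(1-s)$ where $\gamma(s)$ is the ratio of gamma factors from \eqref{defLambdaFs}–\eqref{FE}, namely $(2\pi)^{2s-1}\Gamma(3/2-s+k-2)\cdots/\bigl(\Gamma(s+k-3/2)\Gamma(s+1/2)\bigr)$ up to the sign $(-1)^k$, the shifted integral becomes
\[
\frac{1}{2\pi\ic}\int_{(-\delta)} \gamma(s)\,Z_F(1-s)\,\frac{x^s}{s}\,\dd s
= \sum_{n\geq 1} a_F(n)\,\frac{1}{2\pi\ic}\int_{(-\delta)} \gamma(s)\,\frac{(x/n)^s}{s}\cdot n^{-?}\,\dd s,
\]
after expanding $Z_F(1-s)$ as a Dirichlet series (legitimate once $\re(1-s)>1$, i.e. after moving $\delta$ appropriately or deforming). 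The inner integral is a Mellin–Barnes integral in the four-gamma-factor quotient; by Stirling's formula the integrand is, for each $n$, essentially $(\,\text{const}\,)\cdot (nx)^{(\text{something})}$ times an oscillating exponential, and a stationary-phase or Mellin–Barnes asymptotic evaluation produces a Bessel-type term. Because the gamma product has four factors with total "weight" $2$ in $s$ (two in the numerator, two in the denominator, but shifted by $k$), the saddle point sits at $\re s \asymp (nx)^{1/4}$ scale and yields precisely the cosine term $\cos\bigl(4\sqrt{2\pi}(nx)^{1/4}+\pi/4\bigr)$ with amplitude $x^{3/8} n^{-5/8}/(2\pi)^{3/4}$. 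I would then truncate this $n$-sum at $M$, bounding the tail $n>M$ using \eqref{RP-Conjecture} ($a_F(n)\ll d_4(n)\ll n^\varepsilon$) against $n^{-5/8}(nx)^{1/4}\cdot$(the cosine, bounded by $1$ — but one needs a second-derivative/van der Corput estimate on the oscillation to gain the convergence Das' remark alludes to), giving $\ll (x^3/M)^{1/4+\varepsilon}$, while the error from truncating the original $s$-integral and from the residue contributes $\ll (xM)^{1/4+\varepsilon}$.

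The main obstacle — and the point the authors flag with "Convergence issues however appear" — is that the naive interchange of the sum over $n$ and the Mellin–Barnes integral is not absolutely convergent: with four gamma factors and $a_F(n)\ll n^\varepsilon$, the resulting $n$-series behaves like $\sum n^{-5/8+\varepsilon}$ times an oscillatory factor, which diverges absolutely. I would handle this either by a Cesàro/Riesz smoothing (replace the sharp cutoff $\mathbf{1}_{n\leq x}$ by $(1-n/x)$ or a higher Riesz mean, derive the clean asymptotic there, then undo the smoothing by a difference argument at cost $x^\varepsilon$), or by keeping the $s$-contour at finite height $T$ so that only finitely many terms ($n\ll T^?$) contribute and estimating the rest by partial summation exploiting the oscillation of the Bessel factor. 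Once \eqref{Voronoi} is in hand, \eqref{MeanValue1/2_deux} follows by choosing $M$ to balance the two error terms — $x^3 M^{-1} = xM$ gives $M\asymp x$, error $\asymp x^{1/2+\varepsilon}$, but the main term is then $x^{3/8}\sum_{n\leq x} n^{-5/8+\varepsilon}\ll x^{3/8}\cdot x^{3/8+\varepsilon} = x^{3/4+\varepsilon}$, which is worse than claimed — so instead one optimizes with a smaller $M$ (using the extra cancellation in the cosine sum via a further van der Corput bound on $\sum_{n\leq M} a_F(n)n^{-5/8}e(4\sqrt{2\pi}(nx)^{1/4})$) to push the exponent down to $3/5+\varepsilon$; extracting that cancellation in the short exponential sum is the second technical heart of the proof.
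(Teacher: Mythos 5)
Your outline of the Voronoi formula itself is essentially the paper's route: Perron at height $T$ tied to $M$ (the paper takes $T^4=4\pi^2(M+\tfrac12)x$), shift to $\re s=-\varepsilon$, functional equation, expansion of $Z_F(1-s)$, and asymptotic evaluation of the resulting gamma-quotient integrals — the paper resolves your ``convergence issue'' exactly by your second suggested device: the contour is kept at finite height, the terms $n>M$ are killed by a first-derivative test (the choice of $T$ makes the phase derivative $\geq\abs{\log(n/(M+\tfrac12))}$), and only for $n\leq M$ is the contour completed to an infinite one so that the Chandrasekharan--Narasimhan/Lau--Tsang lemma gives the cosine main term. (Minor point: you have the two error terms attributed to the wrong sources — in the paper $(x^3M^{-1})^{1/4+\varepsilon}$ comes from the Perron truncation at height $T$, while the $n>M$ tail and the horizontal/auxiliary segments cost $\ll Tx^{\varepsilon}\asymp(xM)^{1/4+\varepsilon}$ — but that is cosmetic.)

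The genuine gap is in your deduction of \eqref{MeanValue1/2_deux}. You balance the two \emph{error} terms, get $M\asymp x$, observe the main term is then too big, and conclude that one must extract cancellation in the exponential sum $\sum_{n\leq M}a_F(n)n^{-5/8}\cos\bigl(4\sqrt{2\pi}(nx)^{1/4}+\tfrac{\pi}{4}\bigr)$ via van der Corput, calling this ``the second technical heart of the proof.'' No such cancellation is needed, and you do not prove it (nor could van der Corput be applied directly to a sum carrying the unknown coefficients $a_F(n)$ without first removing them, e.g.\ by Cauchy--Schwarz, which you do not address). The correct optimization balances the \emph{trivial} bound on the main term, $x^{3/8}\sum_{n\leq M}d_4(n)n^{-5/8}\ll(xM)^{3/8+\varepsilon}$, against the error $(x^3M^{-1})^{1/4+\varepsilon}$: taking $M=x^{3/5}$ makes both $\ll x^{3/5+\varepsilon}$, while $(xM)^{1/4+\varepsilon}\ll x^{2/5+\varepsilon}$ is smaller, which is exactly how the paper concludes. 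So as written, your proposal leaves the bound $x^{3/5+\varepsilon}$ resting on an unproved (and unnecessary) estimate; replacing that step by the trivial bound and the choice $M=x^{3/5}$ repairs it.
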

\begin{proof}
Without loss of generality, we assume that \(M\in\N\). Let \(\kappa:=1+\varepsilon\) and  
\begin{equation}\label{T}
T^4 = 4\pi^2 (M+\tfrac{1}{2})x.
\end{equation}
By the Perron formula (see \cite[Corollary II.2.4]{MR1342300}) 
we have
\begin{equation}\label{3.1}
\sum_{n\leq x} a_F(n)= \frac1{2\pi\ic} \int_{\kappa-\ic T}^{\kappa+\ic T} Z_F(s)\frac{x^s}s\dd s + O_{F, \varepsilon}\left(x^{3/4+\varepsilon}M^{-1/4}+x^\varepsilon\right).
\end{equation}

We shift the line of integration horizontally to \(\re s=-\varepsilon\), the main term gives
\begin{equation}\label{3.3}
\frac1{2\pi \ic } \int_{\kappa-\ic T}^{\kappa+\ic T} Z_F(s)\frac{x^s}s\dd s= Z_F(0) + \frac1{2\pi \ic }\int_{\mathscr{L}} Z_F(s)\frac{x^s}s\dd s,
\end{equation}
where \(\mathscr{L}\) is the contour joining the points \(\kappa\pm \ic T\) and \(-\varepsilon\pm \ic T\).
Using the convexity bound~\cite[\S 1.3]{MR2331346}
\[
Z_F(\sigma+\ic t)\ll_{F, \varepsilon} (\abs{t}+1)^{\max\{2(1-\sigma), \, 0\}+\varepsilon}\quad(-\varepsilon\leq \sigma\leq \kappa),
\] 
the integrals over the horizontal segments and the term \(Z_F(0)\) can be absorbed in \(O_{F, \varepsilon}\left((Tx)^\varepsilon (T+T^{-1}x)\right)=O_{F, \varepsilon}\left(x^{1/4+\varepsilon}M^{1/4}+x^{3/4+\varepsilon}M^{-1/4}\right)\). 

To handle the integral over the vertical segment \(\mathscr{L}_{\mathrm{v}}:=[-\varepsilon-\ic T, -\varepsilon+\ic T]\),  we invoke the functional equation \eqref{FE}. We deduce that 
\begin{equation}\label{3.4}
\frac1{2\pi \ic } \int_{\mathscr{L}_\mathrm{v}} Z_F(s)\frac{x^s}s \dd s= (-1)^k \sum_{n\geq 1}\frac{a_F(n)}n  I_{\mathscr{L}_\mathrm{v}}(nx),
\end{equation}
where   
\[
I_{\mathscr{L}_\mathrm{v}}(y):=\frac1{2\pi \ic } \int_{\mathscr{L}_\mathrm{v}} (2\pi)^{2s-1} \frac{\Gamma(k-\tfrac{1}{2}-s) \Gamma(\tfrac{3}{2}-s)}{\Gamma(s+k-\tfrac{3}{2}) \Gamma(s+\tfrac{1}{2})} \frac{y^s}s \dd s.
\]
By using the Stirling formula
\[
\Gamma(\sigma+\ic t)=\sqrt{2\pi}\abs{t}^{\sigma-1/2}\e^{-\pi\abs{t}/2+\ic (t\log\abs{t}-t)+\ic \sgn(t) (\pi/2) (\sigma-1/2)} \left\{1+O\left(t^{-1}\right)\right\}
\]
uniformly for \(\sigma_1\leq \sigma\leq \sigma_2\) and \(\abs{t}\geq 1\), the quotient of the four gamma factors is 
\begin{equation}\label{quotient}
\abs{t}^{2-4\sigma} \e^{-4\ic (t\log\abs{t}-t)+\ic \sgn(t) \pi (1-k)} \big\{1+O\left(t^{-1}\right)\big\}
\end{equation} 
for bounded \(\sigma\) and any \(\abs{t}\geq 1\), where the implied constant depends on \(\sigma\) and \(k\). Together with the second mean value theorem for integrals \cite[Theorem I.0.3]{MR1342300}, we obtain
\begin{equation}\label{3.5}
\begin{aligned}
I_{\mathscr{L}_\mathrm{v}}(nx)
& \ll (nx)^{-\varepsilon}\left(\abs*{\int_1^T t^{1+4\varepsilon} \e^{-\ic g(t)}\dd t}+ T^{1+4\varepsilon}\right)
\\
& \ll T \left(\frac{T^4}{nx}\right)^\varepsilon\left(\abs*{\int_a^T \e^{-\ic g(t)}\dd t}+1\right)
\end{aligned}
\end{equation}
for some \(1\leq a\leq T\), where \(g(t):=t\log\left(t^4/(4\pi^2nx)\right)-4t\).
In view of \eqref{T}, we have
\[
g'(t)=-\log(4\pi^2nx/t^4)<0
\qquad\text{and}\qquad
\abs{g'(t)}\geq \abs{\log (n/(M+\tfrac12))}
\]
for \(n\geq M+1\) and \(1\leq t\leq T\). Using \eqref{RP-Conjecture} and \cite[Theorem I.6.2]{MR1342300}, we infer that 
\begin{equation}\label{3.6}
\begin{aligned}
\sum_{n>M}\frac{a_F(n)}n  I_{\mathscr{L}_\mathrm{v}}(nx) 
& \ll T \left(\frac{T^4}{x}\right)^\varepsilon 
\sum_{n>M} \frac{d_4(n)}{n^{1+\varepsilon}} 
\left(\abs*{\log \frac{n}{M+\frac12}}^{-1} + 1\right)
\\
& \ll  T \left(\frac{T^4}{x}\right)^\varepsilon  
\left\{\sum_{M<n\leq 2M} \frac{d_4(n) (M+\frac12)}{n^{1+\varepsilon} \abs{n-M-\frac12}}+\frac{1}{M^{\varepsilon/2}}\right\}
\\
& \ll  T \left(\frac{T^4}{\sqrt{M}x}\right)^\varepsilon 
\\ 
& \ll T x^\varepsilon.
\end{aligned}
\end{equation}

For \(n\leq M\), we extend the segment of integration \(\mathscr{L}_\mathrm{v}\) to an infinite line \(\mathscr{L}_\mathrm{v}^*\) in order to apply Lemma~1 in \cite{MR0153643}. Write 
\[
\mathscr{L}_\mathrm{v}^\pm:= [\tfrac{1}{2}+\varepsilon \pm \ic T, \tfrac{1}{2}+\varepsilon \pm \ic \infty),\qquad\mathscr{L}_{\mathrm{h}}^\pm:= [-\varepsilon \pm \ic T, \tfrac{1}{2} + \varepsilon \pm \ic T]
\]
and define \(\mathscr{L}_\mathrm{v}^*\) to be the positively oriented contour consisting of \(\mathscr{L}_\mathrm{v}\), \(\mathscr{L}_\mathrm{v}^\pm\) and \(\mathscr{L}_{\mathrm{h}}^\pm\). In view of \eqref{quotient}, the contribution over the horizontal segments \(\mathscr{L}_{\mathrm{h}}^\pm\) is 
\begin{align*}
I_{\mathscr{L}_{\mathrm{h}}^\pm}(nx) 
& \ll \int_{-\varepsilon}^{1/2-\varepsilon} 
(2\pi)^{2\sigma-1} T^{2-4\sigma} \frac{(nx)^\sigma}{T} \dd\sigma
\\ 
& \ll T \int_{-\varepsilon}^{1/2-\varepsilon} 
\left(\frac{nx}{T^4}\right)^{\sigma} \dd\sigma
\\ 
& \ll T x^\varepsilon.
\end{align*}
As in \eqref{3.5}, for \(n\leq M\) we get that
\begin{align*}
I_{\mathscr{L}_\mathrm{v}^\pm}(nx) 
& \ll (nx)^{1/2+\varepsilon}
\left(\int_{T}^{\infty} t^{-1-4\varepsilon} \e^{-\ic g(t)} \dd t + \frac{1}{T^{1+4\varepsilon}}\right)
\\
& \ll T \left(\frac{nx}{T^4}\right)^{1/2+\varepsilon}
\left(\abs*{\log \frac{M+\frac12}n}^{-1} + 1\right)
\\
& \ll T \left(\abs*{\log \frac{M+\frac12}n}^{-1} + 1\right).
\end{align*}
So
\begin{equation}\label{3.8}
\begin{aligned}
\sum_{n\leq M} \frac{a_F(n)}n  \left(I_{\mathscr{L}_\mathrm{v}^\pm}(nx)+I_{\mathscr{L}_{\mathrm{h}}^\pm}(nx)\right)
& \ll Tx^{\varepsilon/2} \sum_{n\leq M} \frac{d_4(n)}{n}
\left(\abs{\log \frac{M+\frac12}n}^{-1} + 1\right)
\\
& \ll Tx^{\varepsilon/2} \sum_{n\leq M} \frac{d_4(n)(M+\frac12)}{n\abs{n-M-\frac12}}
+ T x^\varepsilon
\\ 
& \ll T x^\varepsilon.
\end{aligned}
\end{equation} 
Define
\[
I_{\mathscr{L}_\mathrm{v}^*}(y)=\frac{1}{4\pi^2 \ic }\int_{\mathscr{L}_\mathrm{v}^*}\frac{\Gamma(k-\tfrac{1}{2}-s) \Gamma(\tfrac{3}{2}-s)\Gamma(s)}{\Gamma(s+k-\tfrac{3}{2}) \Gamma(s+\tfrac{1}{2})\Gamma(1+s)} (4\pi^2 y)^{s} \dd s. 
\]
After a change of variable \(s\) into \(1-s\), we see that 
\[ 
I_{\mathscr{L}_\mathrm{v}^*}(y)=\frac{I_0(4\pi^2 y)}{2\pi},
\]
with 
\[
I_0(t):= \frac1{2\pi \ic }\int_{\mathscr{L}_\varepsilon}\frac{\Gamma(s+k-\tfrac{3}{2}) \Gamma(s+\tfrac{1}{2})\Gamma(1-s)}{\Gamma(k-\tfrac{1}{2}-s) \Gamma(\tfrac{3}{2}-s)\Gamma(2-s)}t^{1-s} \dd s. 
\] 
Here \(\mathscr{L}_\varepsilon\) consists of the line \(s=\tfrac12-\varepsilon+\ic \tau\) with \(\abs{\tau}\geq T\), together with three sides of the rectangle whose vertices are \(\tfrac12-\varepsilon-\ic T\), \(1+\varepsilon-\ic T\), \(1+\varepsilon+\ic T\) and \(\tfrac12-\varepsilon+\ic T\). Note that all the poles of the integrand in \(I_0(t)\) lie on the left of the line \(\mathscr{L}_{\varepsilon}\). 

Using a result due to Chandrasekharan and Narasimhan~\cite[Lemma 1]{MR0153643} generalised by Lau \& Tsang~\cite[Lemma 2.2]{MR1887887} we obtain (note that a factor \(\sqrt{2}\) is missing for the definition of \(e_0\) in both references) %
\[
I_0(t)=\frac{(-1)^k}{\sqrt{2\pi}}t^{3/8}\cos\left(4t^{1/4}+\frac{\pi}{4}\right)+O\left(t^{1/8}\right). %
\]
It hence follows that
\begin{equation}\label{3.9}
I_{\mathscr{L}_\mathrm{v}^*}(nx)=(-1)^k\frac{(nx)^{3/8}}{(2\ic)^{3/4}}\cos\left(4\sqrt{2\pi} (nx)^{1/4}+\frac{\pi}{4}\right) + O\left((nx)^{1/8}\right).
\end{equation}
We conclude 
\begin{equation}\label{3.10}
\sum_{n\leq M}\frac{a_F(n)}n  I_{\mathscr{L}_\mathrm{v}}(nx)=\frac{(-1)^k}{(2\pi)^{3/4}}x^{3/8}\sum_{n\leq M} \frac{a_F(n)}{n^{5/8}} \cos\left(4\sqrt{2\pi} (nx)^{1/4}+\frac{\pi}{4}\right)
+ O\left(x^{1/4+\varepsilon}M^{1/4}\right) 
\end{equation}
from  \eqref{3.8} and \eqref{3.9}. Finally the asymptotic formula \eqref{Voronoi} by \eqref{3.1}-\eqref{3.4}, \eqref{3.6} and \eqref{3.10}.

Since %
\[ %
x^{3/8}\sum_{n\leq M} \frac{a_F(n)}{n^{5/8}} \cos\left(4\sqrt{2\pi} (nx)^{1/4}+\frac{\pi}{4}\right)\ll (xM)^{3/8+\varepsilon}, %
\]
the choice of \(M=x^{3/5}\) in~\eqref{Voronoi} gives~\eqref{MeanValue1/2_deux}.
\end{proof}

\section{Proof of the Theorem}

We establish a lemma that has a similar statement as a one due to Lau \& Wu~\cite[Lemma 3.2]{MR2551607}. However, due to convergence issue, the proof is more delicate.  
%

\begin{lemma}\label{lem1}
Let \(F\) be in \(H_k^*\). Define
\[
S_{F}(x):= \sum_{n\leq x} a_{F}(n).
\]
There exist positive absolute constants \(C, c_1,c_2\) and \(X_0(F)\) depending only on \(F\) such that for all \(X\geq X_0(F)\), we can find \(x_1,x_2\in [X, X+CX^{3/4}]\) for which
\[
S_{F}(x_1)> c_1 X^{3/8}\qquad\text{and}\qquad S_{F}(x_2)< -c_2X^{3/8}. %
\]
\end{lemma}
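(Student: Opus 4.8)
The plan is to run a resonance argument in the spirit of Lau \& Tsang and Lau \& Wu \cite{MR1887887, MR2551607}, detecting the leading term of the Voronoi formula \eqref{Voronoi}. After the substitution $x=y^4$ --- which sends $[X,X+CX^{3/4}]$ to an interval of length $\asymp C/4$ around $y=X^{1/4}$ --- the main term of \eqref{Voronoi} becomes, up to the slowly varying factor $y^{3/2}$, the incomplete trigonometric sum $\sum_{n\le M}a_F(n)n^{-5/8}\cos\bigl(4\sqrt{2\pi}\,n^{1/4}y+\tfrac{\pi}{4}\bigr)$, whose lowest frequency $4\sqrt{2\pi}$ is carried by the term $n=1$ with coefficient $a_F(1)=1$. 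I would fix an \emph{absolute} constant $C$ so large that $[X,X+CX^{3/4}]$ contains two subintervals $J_{+}$ and $J_{-}$, each of length $\asymp X^{3/4}$, on which respectively $\cos\bigl(4\sqrt{2\pi}\,x^{1/4}+\tfrac{\pi}{4}\bigr)\ge\tfrac12$ and $\cos\bigl(4\sqrt{2\pi}\,x^{1/4}+\tfrac{\pi}{4}\bigr)\le-\tfrac12$; this is possible once $C$ exceeds a numerical constant, because one full period of the leading term has length $\asymp X^{3/4}$.

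On each of $J_{\pm}$ I choose a smooth bump $w_{\pm}\in C_c^{\infty}$ with $0\le w_{\pm}\le1$, equal to $1$ on the middle third of $J_{\pm}$, so that $\int w_{\pm}\asymp X^{3/4}$ and $w_{\pm}^{(j)}\ll X^{-3j/4}$, and I consider the resonance integral
\[
\Phi_{\pm}:=\int_{J_{\pm}}S_F(x)\,\cos\bigl(4\sqrt{2\pi}\,x^{1/4}+\tfrac{\pi}{4}\bigr)\,w_{\pm}(x)\,\dd x .
\]
Inserting the Voronoi expansion of $S_F$, the diagonal (the $n=1$ term) contributes $(2\pi)^{-3/4}\int_{J_{\pm}}x^{3/8}\cos^{2}\bigl(4\sqrt{2\pi}\,x^{1/4}+\tfrac{\pi}{4}\bigr)w_{\pm}(x)\,\dd x$, and since $\cos^{2}\ge\tfrac14$ on $J_{\pm}$ this is $\ge\tfrac14(2\pi)^{-3/4}X^{3/8}\int w_{\pm}\,(1+o(1))\asymp X^{3/8}\cdot X^{3/4}=X^{9/8}$, and it is manifestly positive --- there is no cancellation to worry about. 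Granting that every other contribution to $\Phi_{\pm}$ is $o(X^{9/8})$ (see below), we get $\Phi_{+}\gg X^{9/8}$ and $\Phi_{-}\gg X^{9/8}$. On $J_{+}$ the weight $\cos(\cdots)w_{+}$ lies in $[0,w_{+}]$, so $S_F(x)\cos(\cdots)w_{+}(x)\le S_F(x)^{+}w_{+}(x)$ pointwise, hence $\int_{J_{+}}S_F^{+}w_{+}\gg X^{9/8}$ and $\sup_{J_{+}}S_F\gg X^{9/8}/\int w_{+}\gg X^{3/8}$; this yields $x_1$. On $J_{-}$ the weight $\cos(\cdots)w_{-}$ lies in $[-w_{-},0]$, so $S_F(x)\cos(\cdots)w_{-}(x)\le S_F(x)^{-}w_{-}(x)$ pointwise, hence $\int_{J_{-}}S_F^{-}w_{-}\gg X^{9/8}$ and $\sup_{J_{-}}\bigl(-S_F\bigr)\gg X^{3/8}$; this yields $x_2$. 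The constants $C,c_1,c_2$ are absolute and $X_0(F)$ depends on $F$ only through the implied constant in \eqref{Voronoi}.

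The off-diagonal terms $2\le n\le M$ of the Voronoi sum, multiplied by the resonator, carry the combined phase $4\sqrt{2\pi}\bigl((nx)^{1/4}\pm x^{1/4}\bigr)$, whose derivative is $\asymp x^{-3/4}n^{1/4}$ and stays away from $0$ because $n\ge2$; repeated integration by parts against the smooth, compactly supported $w_{\pm}$ shows the $n$-th such term is $\ll X^{9/8}a_F(n)n^{-5/8}n^{-k/4}$ for every fixed $k$, so $\sum_{n\ge2}$ of them is $\ll\epsilon_k X^{9/8}$ with $\epsilon_k\to0$, and choosing $k$ large disposes of them. The genuine obstacle --- and the reason this is more delicate than \cite[Lemma~3.2]{MR2551607} --- is the error term of \eqref{Voronoi}: for \emph{every} admissible $M$ it is unconditionally of size $\gg x^{1/2}$ (the Perron and horizontal-contour pieces force $x^{3/4}M^{-1/4}$, the tail forces $(xM)^{1/4}$, and these balance only at $M\asymp x$), hence always larger than $X^{3/8}$, and when integrated against $\abs{\cos(\cdots)w_{\pm}}\le w_{\pm}$ it would swamp the main term: it cannot be used as a black box. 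One has to use that this ``error'' is itself oscillatory. The clean way is to first establish the exact Voronoi formula with the series $\sum_{n\ge1}a_F(n)n^{-5/8}\cos\bigl(4\sqrt{2\pi}(nx)^{1/4}+\tfrac{\pi}{4}\bigr)$ interpreted in the Ces\`aro (Riesz) sense --- which is necessary since $\sum a_F(n)n^{-5/8}$ diverges --- and then to integrate it term by term against the smooth weight $\cos(\cdots)w_{\pm}$: each term becomes a non-stationary oscillatory integral decaying super-polynomially in $n^{1/4}$ for $n\ge2$, so the resulting series is absolutely convergent and dominated by the $n=1$ diagonal, and no truncation error survives. Equivalently, one re-runs the proof of Lemma~\ref{lemmaVoronoi} with the resonator already inside the $x$-integral, so that, the inner integral $\int x^{s}\cos(\cdots)w_{\pm}(x)\,\dd x$ being stationary only near $\abs{t}\asymp X^{1/4}$, the contour shift never meets the Perron truncation and the terms $n>M$ acquire a second oscillation and disappear. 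Carrying through this Ces\`aro step, and in particular justifying the term-by-term integration of a conditionally summable series, is where I expect the real work to lie; the rest is the standard bookkeeping of the resonance method.
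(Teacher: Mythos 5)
Your overall strategy is the right one (resonance against the lowest frequency of the Voronoi expansion, with the exact formula taken in the Ces\`aro sense \`a la Hafner because the truncated formula's error term $\gg x^{1/2}$ is useless here), and your sign-extraction mechanics would be fine \emph{if} the off-diagonal terms were really negligible. But the key quantitative step fails as you state it. The combined phase of the $n$-th term against your resonator is $4\sqrt{2\pi}\bigl(n^{1/4}-1\bigr)x^{1/4}$, so its derivative is $\asymp (n^{1/4}-1)\,x^{-3/4}$, not $\asymp n^{1/4}x^{-3/4}$; integration by parts gains a factor $\asymp (n^{1/4}-1)^{-1}$ per step, which is a \emph{loss} for $2\le n\le 15$. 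Concretely, your windows $J_{\pm}$ must have length at most a fixed fraction of one period $\asymp X^{3/4}$ of $\cos\bigl(4\sqrt{2\pi}x^{1/4}+\tfrac{\pi}{4}\bigr)$ in order for the cosine to keep a fixed sign there, and across such a window the difference phase for $n=2$ varies by less than one radian: there is no cancellation at all, so the $n=2$ term is of size $\asymp |a_F(2)|\,2^{-5/8}X^{3/8}\int w_{\pm}$, which (with $|a_F(2)|$ only bounded by $d_4(2)=4$) can exceed the diagonal contribution $\tfrac12 X^{3/8}\int w_{\pm}\cos^2(\cdots)$. You cannot repair this by enlarging $C$: making the window longer than a half-period destroys the sign-definiteness of $\cos(\cdots)$ on $J_{\pm}$ on which your pointwise inequalities $S_F\cos(\cdots)w_{\pm}\le S_F^{\pm}w_{\pm}$ rest. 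This tension is exactly what the paper's kernel resolves: working in the variable $v=x^{1/4}$ on a window of length $2\kappa$ (so $\asymp\kappa X^{3/4}$ in $x$) with the \emph{nonnegative} Fej\'er-type weight $K_\tau(u)=(1-\abs{u})\bigl(1+\tau\cos(4\sqrt{2\pi}\kappa u)\bigr)$, one gets the off-diagonal bound \eqref{formularbetarho}, i.e. $O\bigl(\kappa^{-2}(\beta-1)^{-2}\bigr)$ with $\beta=n^{1/4}$, which is summable and can be made small by taking the absolute constant $\kappa$ large, while nonnegativity of $K_\tau$ still converts the two signed averages $J_{\pm 1}=\pm\tfrac12+O(\kappa^{-2}+\ldots)$ into pointwise large positive and negative values of $S_F$.

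The second issue is that you explicitly defer the Ces\`aro step (``where I expect the real work to lie''), so the proposal does not actually contain the justification of term-by-term integration, which is the delicate point the paper emphasizes. The paper handles it by invoking Hafner's Theorem C to get uniform $(C,1)$-summability of the exact expansion of $A_0(2\pi v^4)$, splitting the expansion into the main terms $g_n$ and the secondary terms $g_n^*$ coming from the $y^{1/8}$-term of the kernel asymptotics, proving ordinary uniform convergence of $\sum_n g_n^*$ by partial summation using \eqref{MeanValue1/2}, and then using the elementary fact that if $\sum_n\int g_n$ converges it coincides with the $(C,1)$-limit integrated term by term. None of this is in your sketch (the secondary term is not even mentioned), so as written the proof has two genuine gaps: the small-$n$ off-diagonal terms are not $o$ of the diagonal in your setup, and the summability/interchange argument is only announced, not carried out.
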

\begin{proof}
We begin the proof with Theorem~C of Hafner~\cite{MR654524}. 
%
In order to use this result, it is more convenient to introduce the notion of \((C, \ell)\)-summability and to present related simple facts (see~\cite{MR0201863} for more details). Let \(\{g_n(t)\}_{n\geq 0}\) be a sequence of functions.
We write
\[
s(g; n):=\sum_{0\leq\nu\leq n} g_\nu(t),\qquad\sigma(g; n) := \frac{1}{C_n^{(\ell+1)}} \sum_{\nu=0}^n C_{n-\nu}^{(\ell)} s(g; \nu),
\]
where \(C_{n}^{(\ell)} := \binom{\ell+n-1}{n}\). We say that the series of general term \(g_n(t)\) is uniformly \((C, \ell)\)-summable to the sum \(G(t)\) if \(\sigma(g; n)\) converges uniformly to \(G(t)\) as \(n\to\infty\). We have \(C_{0}^{(\ell)}+\cdots + C_{n}^{(\ell)} = C_{n}^{(\ell+1)}\) and if the series \(\sum_n \int g_n(t)\dd t\) converges then the series of general term \(\int g_n(t)\dd t\) is also \((C, \ell)\)-summable and their limits are the same.

As in \cite[page 151]{MR654524}, for \(\rho>-1\) and \(x\notin 2\pi\N\), define
\[
A_\rho(x):=\frac1{\Gamma(\rho+1)} \sum_{2\pi n\leq x} a_{F}(n)(x-2\pi n)^\rho.
\]
Now let \(\mathscr{C}\) be the rectangle with vertices \(c\pm \ic R\) and \(1-b\pm \ic R\) (taken in the counter-clockwise direction), where \(b>c>\max\left\{1, \abs{k-\tfrac{3}{2}}\right\}\) and \(R>\abs*{k-\tfrac{3}{2}}\) are real numbers. Let %
\[ %
Q_\rho(x):= \frac{1}{2\pi\ic } \int_\mathscr{C} \frac{\Gamma(s) (2\pi)^{-s}Z_F(s)}{\Gamma(s+\rho+1)} x^{\rho+s} \dd s. 
\]
Denote by \(\mathscr{C}_{0, b}\) the oriented polygonal path with vertices \(-\ic \infty\), \(-\ic R\), \(b-\ic R\), \(b+\ic R\), \(\ic R\) and \(+\ic \infty\). Let %
\[ %
f_\rho(x) := \frac{1}{2\pi\ic }\int_{\mathscr{C}_{0, b}} \frac{\Gamma(1-s) \Delta(s)}{\Gamma(2+\rho-s) \Delta(1-s)} x^{1+\rho-s} \dd s %
\]
where %
\[
\Delta(s) = \Gamma(s+k-\tfrac{3}{2}) \Gamma(s+\tfrac{1}{2}). %
\]
By \cite[Theorem C]{MR654524}, the series of general term \((-1)^k (2\pi n)^{-1-\rho} a_F(n) f_\rho(2\pi nx)\) is uniformly \((C, \ell)\)-summable for \(\ell>\max\{\tfrac{1}{2}-\rho, 0\}\) on any finite closed interval in \((0, \infty)\) only under the condition \(\rho>-1\) and the sum is \(A_{\rho}(x)-Q_{\rho}(x)\). In particular, we can fix \(\ell=1\) and \(\rho=0\). We shall say \(C\)-summable for \((C,1)\)-summable. 

The only pole of the integrand of \(Q_{0}(x)\) is \(0\), it is encircled by \(\mathscr{C}\) hence %
\begin{equation}\label{UBQrhox}
Q_{0}(x)\ll_F 1\quad(x\geq 1).
\end{equation}

To estimate \(f_{0}(x)\), we use again the result by Lau \& Tsang~\cite[Lemma 2.2]{MR1887887} already used to establish Voronoi formula. We get %
%
%
\begin{equation}\label{7}
f_0(y)=\frac{(-1)^k}{\sqrt{2\pi}}y^{3/8}\cos\left(4y^{1/4}+\frac{\pi}{4}\right)+(-1)^ke_1y^{1/8}\cos\left(4y^{1/4} + \frac{3\pi}{4}\right)+ O\left(\frac{1}{y^{1/8}}\right),
\end{equation}
where \(e_1\) is a absolute constant.

Let
\begin{align*}
\Phi(v)& := (2\pi)^{3/4} \frac{A_{0}(2\pi v^4)}{v^{3/2}},
\\
g_n(v)& := \frac{a_{F}(n)}{n^{5/8}}\cos\left(4\sqrt{2\pi}n^{1/4}v + \frac{\pi}{4}\right),
\\
g_n^*(v)& := \frac{e_1}{v}\frac{a_{F}(n)}{n^{7/8}} \sin\left(4\sqrt{2\pi}n^{1/4}v + \frac{\pi}{4}\right).
\end{align*}
Then the series of general term \(g_n(v)-g_n^*(v)\) is uniformly \(C\)-summable on any finite closed interval in \((0, \infty)\) and the sum is \(\Phi(v) + O(v^{-3/2})\)  (here the term \(O(v^{-3/2})\) comes from \(Q_{0}(2\pi v^4)\) and the \(O\)-term of \eqref{7}). In view of \eqref{MeanValue1/2}, a simple partial integration shows that the series of general term \(g_n^*(v)\) converges to the sum \(\sum_n g_n^*(v)\) uniformly on any finite closed interval in \((0, \infty)\). Thus the series of general term \(g_n(v)\) is uniformly \(C\)-summable on any finite closed interval in \((0, \infty)\) and the sum is \(\Phi(v) + \sum_n g_n^*(v) + O(v^{-3/2})\).

Let \(t\) be any large natural number, \(\kappa>1\) a large parameter that will be fixed later. Write
\[
K_\tau(u) = (1-\abs{u})(1+\tau \cos(4\sqrt{2\pi} \kappa u))
\]
with \(\tau = \pm 1\). We consider the integral
\begin{eqnarray}\label{9}
J_{\tau}= \int_{-1}^1 \Phi(t+\kappa u) K_\tau (u) \dd u.
\end{eqnarray}
We have %
\begin{align*}
\int_{-1}^{1} g_n(t+\kappa u) K_\tau(u) \dd u & = r_{\beta} \frac{a_F(n)}{n^{5/8}},
\\
\int_{-1}^{1} g_n^*(t+\kappa u) K_\tau(u) \dd u & = s_{\beta}e_1 \frac{a_F(n)}{n^{7/8}},
\end{align*}
where
\begin{align*}
r_\beta & := \int_{-1}^1 K_\tau(u)\cos\left(4\sqrt{2\pi}\beta(t+\kappa u) + \frac{\pi}{4}\right) \dd u,
\\
s_\beta & := \int_{-1}^1 \frac{K_\tau(u)}{t+\kappa u}\sin\left(4\sqrt{2\pi}\beta(t+\kappa u) + \frac{\pi}{4}\right) \dd u.
\end{align*}

As in \cite[(3.13)]{MR2551607}, we have
\begin{equation}\label{formularbetarho}
r_\beta= \delta_{\beta =1} \frac{\tau}2+ O\left(\frac{1}{\kappa^2\beta^2}+ \delta_{\beta\neq 1} \frac{1}{\kappa^2(\beta-1)^2}\right)
\end{equation}
and
\begin{equation}\label{UBsbetarho}
s_\beta\ll (t\beta\kappa)^{-1}. %
\end{equation}

It follows that %
\begin{align*}
\int_{-1}^{1} g_1(t+\kappa u) K_\tau(u) \dd u & = \frac{\tau}2+ O\left(\frac{1}{\kappa^2}\right),
\\
\int_{-1}^{1} g_n(t+\kappa u) K_\tau(u) \dd u & \ll \frac{d_4(n)}{\kappa^2 n^{9/8}}\quad(n\geq 2),
\\
\int_{-1}^{1} g_n^*(t+\kappa u) K_\tau(u) \dd u & \ll\frac{d_4(n)}{\kappa tn^{9/8}}, %
\end{align*}
where all the implied constants are absolute. These estimates show that
\begin{align*}
\sum_{n\geq 1} \int_{-1}^{1} g_n(t+\kappa u) K_\tau(u) \dd u& = \frac{\tau}2+ O\left(\frac{1}{\kappa^2}\right),
\\
\sum_{n\geq 1}\int_{-1}^{1} g_n^*(t+\kappa u) K_\tau(u) \dd u & \ll \frac{1}{\kappa t}.
\end{align*}
In view of the remark about \(C\)-summability, we obtain
\begin{equation}\label{11}
J_{\tau}= \frac{\tau}2 + O\left(\frac{1}{\kappa t} + \frac{1}{t^{3/2}}\right).
\end{equation}

We fix \(\kappa\) large enough. When \(X\geq \kappa^4\), we take \(t= \ent*{X^{1/4}}\). So \(t>2 \kappa\) and the \(O\)-term in \(J_\tau\) is \(\ll \kappa^{-2}\), so the main term dominates if \(\kappa\) has been chosen sufficiently large. Therefore
\[
J_{-1}<-\frac{1}{4}\qquad\text{and}\qquad J_1>\frac{1}{4}.
\]
Since \(S_F(x) = A_0(2\pi x)\), we rewrite this as %
\[
\int_{-1}^1\frac{S_F(t+\kappa u)}{(t+\kappa u)^{3/2}}K_{-1}(u)\dd u<-\frac{1}{4(2\pi)^{3/4}}\quad\text{and}\quad\int_{-1}^1\frac{S_F(t+\kappa u)}{(t+\kappa u)^{3/2}}K_{1}(u)\dd u>\frac{1}{4(2\pi)^{3/4}}.
\]

The kernel function \(K_\tau(u)\) is nonnegative and satisfies
\[
1-(3\pi\kappa)^{-2}\leq\int_{-1}^1 K_\tau (u)\dd u \leq2\qquad (\tau = \pm 1).
\]
As a consequence, we have
\[
\frac{S_F((t+\kappa \eta_{+})^4)}{(t+\kappa \eta_{+})^{3/2}}\geq \frac{1}{2(2\pi)^{3/4}}
\]
and
\[
 \frac{S_F((t+\kappa \eta_{-})^4)}{(t+\kappa \eta_{-})^{3/2}}\leq-\frac{1}{4\left(1-(3\pi\kappa)^{-2}\right)(2\pi)^{3/4}}
\]
for some \(\eta_{\pm}\in [-1,1]\). These two points deviate from \(X\) by a distance \(\ll X^{3/4}\), since the difference between \((t\pm \kappa)^4\) is \(\ll \kappa t^3\asymp X^{3/4}\). 

This implies the result of Lemma~\ref{lem1}.
\end{proof}

Now we are ready to prove the Theorem.

By Lemma~\ref{lem1}, for any \(x\geq X_0(F)\) we can pick three points \(x<x_1< x_2<x_3 <x+3Cx^{3/4}\) such that \(S_{F}(x_i)<-cx^{3/8}\) \((i=1,3)\) and \(S_{F}(x_2)> cx^{3/8}\) for some absolute constant \(c>0\). (Note that \(y+Cy^{3/4}\leq x+3Cx^{3/4}\) for \(y=x+Cx^{3/4}\).) Hence we deduce that
\[
\sum_{\substack{x_1<n<x_2\\ a_F(n)>0}} a_F(n)\geq S_{F}(x_2)- S_{F}(x_1)>2cx^{3/8}
\]
and
\[
\sum_{\substack{x_2<n<x_3\\ a_F(n)<0}} \left(-a_F(n)\right)\geq -\left(S_{F}(x_3)- S_{F}(x_2)\right)>2cx^{3/8}.
\]
Thus, the Theorem follows as each term in the two sums are positive and \(\ll_\varepsilon n^\varepsilon\).
\providecommand{\bysame}{\leavevmode\hbox to3em{\hrulefill}\thinspace}
\providecommand{\MR}{\relax\ifhmode\unskip\space\fi MR }
\providecommand{\MRhref}[2]{%
  \href{http://www.ams.org/mathscinet-getitem?mr=#1}{#2}
}
\providecommand{\href}[2]{#2}

\end{document}